\let\origsection=\section \def\section{\@ifstar{\origsection*}{\mysection}} 
\def\mysection{\@startsection{section}{1}\z@{.7\linespacing\@plus\linespacing}{.5\linespacing}{\normalfont\scshape\centering\S}}
\numberwithin{equation}{section}
\numberwithin{figure}{section}
\newtheorem{theorem}{Theorem}
\newtheorem{fact}{Fact}
\newcommand{\N}{\mathbb{N}}
\renewcommand{\triangleleft}{\vartriangleleft}
\renewcommand{\leq}{\leqslant}
\renewcommand{\rho}{\varrho}
\renewcommand{\subset}{\subseteq}
\renewcommand{\supset}{\supseteq}
\newcommand{\nottriangleleft}{\not\kern-1pt\mathrel{\triangleleft}}
\begin{document}
\title{Quickly proving Diestel's normal spanning tree criterion}   

\author{Max Pitz}
\address{Hamburg University, Department of Mathematics, Bundesstra\ss e 55 (Geomatikum), 20146 Hamburg, Germany}
\email{max.pitz@uni-hamburg.de}


\begin{abstract}
We present two short proofs for Diestel's criterion that a connected graph has a normal spanning tree provided it contains no subdivision of a countable clique in which every edge has been replaced by uncountably many parallel edges.
\end{abstract}

\maketitle

\section{Overview}

This paper continues a line of inquiry started in \cite{pitz2020unified} with the aim to find efficient algorithms for constructing normal spanning trees in infinite graphs. A rooted spanning tree $T$ of a graph $G$ is called \emph{normal} if the end vertices of any edge of $G$ are comparable in the natural tree order of $T$. Intuitively, all the edges of $G$ run `parallel' to branches of $T$, but never `across'. 

Every countable connected graph has a normal spanning tree, but uncountable graphs might not, as demonstrated by complete graphs on uncountably many vertices. While exact characterisations of graphs with normal spanning trees exist, see e.g.\ \cite{jung1969wurzelbaume,pitz2020d}, these may be hard to verify in practice. The most applied sufficient condition  for normal spanning trees is the following criterion due to Halin~\cite{halin1978simplicial}, and its strengthening due to Diestel \cite{diestel2016simple}, see also \cite[\S6]{pitz2020d} for an updated proof.

\begin{theorem}[Halin, 1978]
\label{thm_halin}
Every connected graph without a $TK^{\aleph_0}$ has a normal spanning tree.
\end{theorem}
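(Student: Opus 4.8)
The plan is to construct the tree by a transfinite recursion that grows a single rooted normal tree, feeding on the known existence of normal spanning trees in the countable case and calling on the exclusion of $TK^{\aleph_0}$ only to argue that the process cannot halt prematurely. Fix a root $r$, put $T_0=\{r\}$, take unions at limit stages, and concentrate on the successor step, where a normal tree $T$ that does not yet span $G$ must be enlarged. Since a nested union of rooted normal trees is again one, normality is preserved through limits for free, so the whole difficulty is localised in the successor step and in understanding when it can fail.

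The structural input I would isolate first is the standard separation behaviour of normal trees: if $T$ is normal in $G$ and $C$ is a component of $G-T$, then the neighbourhood $N(C)$ is a chain in the tree order, and incomparable vertices of $T$ are separated by the intersection of their down-closures. From this I read off the \emph{extension rule}: if some component $C$ has a chain $N(C)$ possessing a maximum $t$, I may attach a neighbour of $t$ inside $C$ as a new child of $t$; every vertex thus added lies strictly above all of $N(C)$, so no edge of $G$ runs across the comparability order and the enlarged tree is again normal. Iterating this rule as often as possible (and taking unions at limits) produces a maximal normal tree $T^{*}$, and the recursion succeeds precisely when $T^{*}$ is spanning.

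The crux, and the step I expect to fight hardest, is to show that failure forces a $TK^{\aleph_0}$. If $T^{*}$ is not spanning then, by maximality, some component $C$ of $G-T^{*}$ has a neighbourhood chain with no maximum, hence contains an increasing sequence $v_0<v_1<\cdots$, each $v_i$ sending an edge into the connected set $C$. I must convert this into a subdivided countable clique, and here the naive reading fails: a single connected set attached along a chain need carry no $TK^{\aleph_0}$ at all (a star is attached along its centre), and, more tellingly, every branch vertex of a $TK^{\aleph_0}$ has infinite degree, so I cannot hope to extract one unless the stuck configuration supplies \emph{infinitely many} vertices of effectively infinite connectivity, not just one ray meeting one set. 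The way I would try to force this is to exploit self-similarity: the obstruction reappears inside $C$ (which again admits no compatible normal tree), producing a nested family of chain-attached connected sets, and a diagonal choice across the levels should yield infinitely many hubs joined pairwise by internally disjoint paths. Menger's theorem, in its infinite form, applied to the separations between the chains and their attached sets, is the tool I would use to realise those disjoint paths; turning the informal self-similarity into an honest recursion that outputs genuinely disjoint linkages between infinitely many infinite-degree vertices is exactly where the real work lies.

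As a hedge, I would keep in reserve an induction on $|G|=\kappa$: the property of having no $TK^{\aleph_0}$ passes to subgraphs, so $G$ is a continuous increasing union of connected subgraphs of size $<\kappa$, each with a normal spanning tree by hypothesis, and it would suffice to choose these trees coherently. The obstacle then migrates to an extension lemma, since an arbitrary normal spanning tree of a subgraph need not extend to a larger one; phrasing the construction along a continuous chain of elementary submodels of some $H(\theta)$ containing $G$, and using the exclusion of $TK^{\aleph_0}$ to keep the relevant separators finite, is what should make the coherent extension available at every stage.
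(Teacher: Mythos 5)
Your main route contains a genuine gap, and you have in fact put your finger on it yourself: the claim that a maximal normal tree $T^{*}$ which fails to span must force a $TK^{\aleph_0}$ is not merely hard --- it is false. Consider the fan consisting of a ray $v_0v_1v_2\ldots$ together with one further vertex $c$ adjacent to every $v_i$. This graph contains no $TK^{\aleph_0}$ (it has a single vertex of infinite degree, whereas every branch vertex of a $TK^{\aleph_0}$ has infinite degree), and it has a normal spanning tree, namely the path $cv_0v_1v_2\ldots$ rooted at $c$; yet your greedy recursion can run up the ray for $\omega$ steps and then be stuck: the component $C=\{c\}$ of $G-T^{*}$ has $N(C)=\{v_0,v_1,\ldots\}$, a chain with no maximum. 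So a stuck run witnesses only bad choices by the algorithm, never structure in $G$, and the self-similarity you hope to exploit collapses here as well: $C$ is a single vertex, so the obstruction does not ``reappear inside $C$''. No diagonalisation or infinite Menger argument can extract a $TK^{\aleph_0}$ from this configuration; the recursion has to be steered from the outset, not analysed after it fails.

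Both known ways of steering it are visible in this paper's two proofs of Theorem~\ref{thm_diestel}, transposed from ``uncountable'' to ``infinite'' connectivity, and your hedge is essentially the first of them: write $G=\bigcup_{i<\sigma}G_i$ along a continuous chain of ${<}|G|$-sized elementary submodels, so that the endpoints of any $G_i$-path have infinite connectivity both in $G_i$ and in $G$. The extension lemma you flag as the obstacle is then exactly the successor step of that proof: absence of a $TK^{\aleph_0}$ keeps $N(C)$ finite for every component $C$ of $G_{i+1}-G_i$, via the sibling of Fact~\ref{fact_Halin} stating that a countably infinite set with pairwise infinite connectivity is the branch set of a $TK^{\aleph_0}$; the closure property forces the finite set $N(C)$ onto a chain of $T_i$, which in particular has a maximum $t_C$; and --- the ingredient your sketch omits entirely --- Jung's theorem (Fact~\ref{fact_Jung}) supplies a normal spanning tree of $C$ with a \emph{prescribed} root adjacent to $t_C$, which is what makes ``choosing the trees coherently'' possible at all. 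The alternative repair is the $\omega$-length algorithm of \cite{pitz2020unified}, mirrored in Section~\ref{sec_proof2}: fix maximal families of independent paths for every pair of finite connectivity \emph{before} the recursion starts, keep each $T_n$ rayless (so every neighbourhood chain is finite and has a maximum, which sidesteps your stuck configuration by design), and extend $T_n$ into each component so as to cover the least-indexed fixed path meeting it; then any component $C$ of $G-\bigcup_n T_n$ has infinite $N(C)$ whose vertices have pairwise infinite connectivity in $G$, and Halin's fact yields the $TK^{\aleph_0}$. Either way, the finiteness of the relevant separators is built into the construction in advance rather than recovered from a failed run, and that is precisely the step your main proposal cannot supply.
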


\begin{theorem}[Diestel, 2016]
\label{thm_diestel}
Every connected graph without fat $TK^{\aleph_0}$ has a normal spanning tree.
\end{theorem}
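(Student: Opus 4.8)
The plan is to prove the contrapositive: if a connected graph $G$ has no normal spanning tree, then $G$ contains a fat $TK^{\aleph_0}$. The natural first move is to translate the hypothesis through Jung's criterion \cite{jung1969wurzelbaume}, which says that $G$ has a normal spanning tree if and only if $V(G)$ is a union of countably many dispersed sets (a set being dispersed if every ray of $G$ can be finitely separated from it). Thus it suffices to prove the purely combinatorial statement that if $V(G)$ is \emph{not} a countable union of dispersed sets, then $G$ contains a fat $TK^{\aleph_0}$. Working with dispersed sets is convenient because it isolates exactly the feature an NST exploits, and it converts the problem into locating, and then ``fattening'', a concrete obstruction.

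To produce the skeleton of that obstruction I would build a \emph{maximal} normal tree $T \sub G$, maximal in the partial order where $T \preceq T'$ means $T$ is a down-closed rooted subtree of $T'$; the union of a chain of such trees is again a normal tree (each vertex keeps finite height, and the endvertices of any edge of $G$ become comparable at some stage), so Zorn's lemma applies. Since a spanning normal tree would already be a normal spanning tree, $T$ is non-spanning, and I pick a component $C$ of $G - V(T)$. The standard normality fact gives that $N(C)$ is a chain in $T$, and maximality rules out $N(C)$ having a maximum, since otherwise a neighbour of $C$ could be added as a leaf above it. As every vertex of $T$ has finite height, a chain without maximum is cofinal in a ray $R = t_0 < t_1 < \cdots$, so $C$ converges to the end $\omega \in \Omega(T)$ containing $R$. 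The infinitely many disjoint $C$--$R$ paths, together with the segments of $R$, already subdivide a countable clique on a subsequence of the $t_i$, reproving Halin's theorem (Theorem~\ref{thm_halin}); these $t_i$ will be the branch vertices.

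The crux is upgrading this plain $TK^{\aleph_0}$ to a \emph{fat} one, i.e.\ securing \emph{uncountably} many internally disjoint paths between each pair of branch vertices. A single component $C$ contributes only countably many disjoint bypasses, so I must exploit the \emph{global} failure of the dispersed decomposition rather than the local stuckness of one tree: because $V(G)$ admits no countable dispersed cover, the vertices accumulating at $\omega$ cannot be peeled off in countably many dispersed pieces, and a pigeonhole/counting argument should localise an \emph{uncountable} family of mutually independent connections converging to $\omega$ (an $\aleph_1$-sized family of components or separations that resists countable dispersal). These supply, after re-routing along $R$, the required uncountable bundles of paths.

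I expect the main obstacle to be the final disjointification: turning the uncountable family accumulating at the single end $\omega$ into paths that are internally disjoint \emph{simultaneously for all pairs} of the countably many branch vertices, so that they genuinely subdivide a countable clique with each edge replaced by uncountably many parallel paths, rather than forming a mere uncountable fan toward $\omega$. Distributing the $\aleph_1$ connections among the countably many pairs while keeping uncountably many per pair and pairwise internally disjoint is the delicate bookkeeping, and it is precisely here that the ``fat'' hypothesis is calibrated; carrying it out yields the fat $TK^{\aleph_0}$ and, contrapositively, the theorem.
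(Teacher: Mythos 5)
Your proposal is not yet a proof: the two load-bearing steps are precisely the ones left open, and one intermediate claim is false. The claim that the skeleton extracted from a maximal normal tree ``already subdivides a countable clique, reproving Halin's theorem'' does not hold: a component $C$ whose neighbourhood is a chain cofinal along a ray $R$ need not yield any $TK^{\aleph_0}$ at all. Consider the infinite ladder, with $R$ one side and $C$ the other, joined by the rungs: $R$ (rooted at its first vertex) is a maximal normal tree there, $N(C)$ is cofinal in $R$ with no maximum, yet the ladder has maximum degree $3$ and so contains no $TK^{\aleph_0}$. (The ladder of course has a normal spanning tree; the point is that Zorn-maximality of $T$ carries \emph{no} connectivity information about the pairs $t_i, t_j \in N(C)$, so neither a plain nor a fat clique subdivision can be extracted from this configuration alone. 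This is exactly why known proofs of Theorem~\ref{thm_halin} do not proceed via maximal normal trees.) Your appeal to Jung's criterion is harmless but idle: after the Zorn step you never use the dispersed-set reformulation, and the ``global failure'' of a countable dispersed cover does not localise to the one end $\omega$ of your particular maximal tree --- no argument is offered connecting the two.

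Second, and decisively, the ``crux'' paragraph is an acknowledgement of a gap rather than a proof: ``a pigeonhole/counting argument should localise an uncountable family of mutually independent connections'' is not an argument, and the final ``disjointification'' you flag as delicate is genuinely the hard content. The paper resolves both difficulties at once using Halin's Fact~\ref{fact_Halin}: a fat $TK^{\aleph_0}$ with branch set $U$ exists if and only if $\kappa(u,v)$ is uncountable for all $u \neq v \in U$ --- this 1967 result of Halin \emph{is} the bookkeeping you defer, reducing the task to certifying pairwise uncountable connectivity on some infinite vertex set. The constructions are then engineered so that failure produces such a set: in the first proof, a continuous chain of subgraphs closed under maximal families of independent paths (via elementary submodels or a countable closure argument); in the second, an $\omega$-step greedy algorithm that pre-fixes a maximal family $\mathcal{P}_{v,w}$ of independent $v{-}w$ paths for each pair of countable connectivity and, at each stage, absorbs the least-index path of $\mathcal{P}_{v,w}$ meeting each component. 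If the limit tree $T$ misses a component $C$, then every pair $v \neq w \in N(C)$ must have uncountable connectivity, since otherwise some fixed path $P^k_{v,w}$ through $C$ would have been swallowed at a finite stage, contradicting that a $T$-path through $C$ still meets it; Fact~\ref{fact_Halin} then yields the fat $TK^{\aleph_0}$. Your $T$, produced by Zorn's lemma, records no information about which paths were absorbed, so its maximality cannot be converted into connectivity lower bounds; repairing the proposal would in effect mean replacing the Zorn step by such a closure or recursive construction and invoking Fact~\ref{fact_Halin} --- i.e., rediscovering the paper's argument.
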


Here, a $TK^{\aleph_0}$ is any subdivision of the countable clique $K^{\aleph_0}$, and a \emph{fat $TK^{\aleph_0}$} is any subdivision of the multigraph obtained from a $K^{\aleph_0}$ by replacing every edge with $\aleph_1$ parallel edges.

Until recently, only fairly involved proofs of these results were available: Halin's original proof employing his theory of simplicial decompositions \cite{halin1978simplicial}, and Diestel's proof strategy building on the forbidden minor characterisation for normal spanning trees \cite{diestel2016simple,pitz2020d}. 

In  \cite{pitz2020unified}, however, the present author found a simple greedy algorithm which constructs the desired normal spanning tree in Halin's Theorem~\ref{thm_halin} in just $\omega$ many steps. The purpose of this  note is to provide two simple proofs also for Theorem~\ref{thm_diestel}, one of them again an $\omega$-length algorithm. 

Notably, this algorithm also yields a new, local version of Theorem~\ref{thm_diestel}: Given a set of vertices $U$ of a connected graph $G$, there exists a normal tree of $G$ containing $U$ if and only if every fat $TK^{\aleph_0}$ in $G$ can be separated from $U$ by a finite set of vertices, see Theorem~\ref{cor_max2} below.

\section{Tree orders and normal trees}

We follow the notation in \cite{Bible}. The \emph{tree-order} $\leq_T$ of a tree $T$ with root $r$ is defined by setting $u \leq_T v$ if $u$ lies on the unique path from $r$ to $v$ in $T$. 
For a vertex $v$ of $T$, let $\lceil v \rceil := \{t \in T \colon t \leq_T v\}$. 

For rooted trees that are not necessarily spanning, one generalises the notion of normality as follows: A rooted tree $T \subset G$ is \emph{normal (in $G$)} if the end vertices of any $T$-path in $G$ (a path in $G$ with end vertices in $T$ but all edges and inner vertices outside of $T$) are comparable in the tree order of $T$. If $T$ is spanning, this clearly reduces to the definition given in the introduction. If $T \subset G$ is normal, then the set of neighbours $N(D)$ of any component $D$ of $G - T$ forms a chain in $T$, i.e.\ all vertices of $N(D)$ are comparable in $\leq_T$. Moreover, incomparable nodes $v,w $ of any normal tree $T \subset G$ are separated in $G$ by $\lceil v \rceil \cap \lceil w \rceil$.

\begin{fact}[Jung {\cite[Satz~6]{jung1969wurzelbaume}}]
\label{fact_Jung}
Let $G$ be a graph with a normal spanning tree. Then for every connected subgraph $C \subset G$ and every $r \in C$ there is a normal spanning tree of $C$ with root $r$.
\end{fact}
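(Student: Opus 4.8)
The plan is to route through Jung's own characterisation of graphs with normal spanning trees: a connected graph $H$ has a normal spanning tree if and only if $V(H)$ is a union of countably many \emph{dispersed} sets, where $U \subseteq V(H)$ is dispersed if every ray of $H$ can be separated from $U$ by a finite set of vertices. Granting this equivalence, the proof splits into three steps: extract a countable dispersed cover of $V(G)$ from the given normal spanning tree $T$; observe that dispersedness is inherited by the subgraph $C$; and reconstruct from the resulting cover a normal spanning tree of $C$ whose root is the prescribed vertex $r$.

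First I would show that the height levels $L_n := \{v \in V(G) : |\lceil v\rceil| = n+1\}$ of $T$ are dispersed in $G$, which exhibits $V(G) = \bigcup_{n} L_n$ as a countable dispersed cover. Fix a ray $R$ of $G$. Since $T$ is spanning and normal, every edge of $G$, hence of $R$, joins $\leq_T$-comparable vertices. Using that each down-closure $\lceil v\rceil$ is the finite root-path to $v$, and that incomparable vertices $u,w$ are separated by $\lceil u\rceil \cap \lceil w\rceil$, one checks that the set $B$ of vertices lying $\leq_T$-below infinitely many vertices of $R$ is a chain, in fact an entire branch $\beta$ of $T$: two incomparable members of $B$ would be separated by a finite set $F = \lceil u\rceil\cap\lceil w\rceil$, yet $R$ would have infinitely many vertices on either side of $F$, impossible for a connected ray crossing a finite separator only finitely often. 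Letting $b$ be the vertex of $\beta$ at height $n+1$, every vertex of $L_n$ other than the height-$n$ vertex of $\beta$ is incomparable to $b$ and hence separated from $b$ by a subset of the finite set $\lceil b\rceil$; since a tail of $R$ lies $\geq_T b$, this finite set separates $R$ from $L_n$.

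The heredity step is immediate. Every ray of $C$ is a ray of $G$, and if a finite set $S$ separates such a ray from $L_n$ in $G$, then $S \cap V(C)$ separates it from $L_n \cap V(C)$ in $C$, as $C$ is a subgraph. Hence each $L_n \cap V(C)$ is dispersed in $C$, and $V(C) = \bigcup_{n}(L_n \cap V(C))$ is a countable dispersed cover of the connected graph $C$.

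The substance is the final step: producing from this cover a normal spanning tree of $C$ rooted at $r$. This is the nontrivial (backward) direction of Jung's criterion, and it is genuinely more than a Zorn's-lemma argument on normal trees ordered by end-extension: a maximal normal tree need not span, as the example of a ray $t_1 t_2 \cdots$ together with a single vertex adjacent to all $t_i$ shows when rooted at $t_1$. Instead one builds the tree by a transfinite depth-first search that processes the dispersed blocks $L_0 \cap V(C), L_1 \cap V(C), \dots$ in turn, starting at $r$; dispersedness guarantees that at each stage the neighbourhood chains encountered are tame enough for normality to survive at limits, and connectedness of $C$ guarantees the search exhausts $V(C)$. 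Taking $r$ as the starting vertex makes it the root at no extra cost. The main obstacle, and the one place where the global hypothesis is really used, is precisely this reconstruction: ensuring that the interplay of dispersedness and connectivity lets the search reach every vertex without ever creating an edge between incomparable nodes.
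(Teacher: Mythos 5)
Your reduction is sound as far as it goes, and its first two steps are correct and standard: the argument that the levels $L_n$ of a normal spanning tree are dispersed (the set of vertices lying $\leq_T$-below infinitely many vertices of the ray forms a chain, since incomparable vertices are separated by the finite set $\lceil u \rceil \cap \lceil w \rceil$, which a ray can cross only finitely often) is exactly the observation the paper itself invokes in the forward direction of Theorem~\ref{cor_max1}, and the heredity of dispersedness under passing to subgraphs is immediate, as you say. For the record, the paper offers no proof of this Fact at all --- it is quoted from Jung --- so the only question is whether your outline amounts to a self-contained proof.

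It does not, because of your third step. The backward, rooted direction of Jung's criterion is the entire substance of the statement, and you neither cite it cleanly as a black box (if you did, you would still owe the rooted refinement: the criterion as stated produces \emph{some} normal spanning tree of $C$, not one with root $r$, and ``taking $r$ as the starting vertex'' is only available to someone who runs the construction, not to someone who quotes its conclusion) nor sketch it correctly. In the genuine construction --- essentially the $\omega$-length algorithm of Section~\ref{sec_proof2}, adapted as in the proof of Theorem~\ref{cor_max1} --- there are no problematic limit stages: an increasing union of normal trees, each end-extending the last, is automatically normal, so dispersedness is \emph{not} needed to make ``normality survive at limits''. What dispersedness actually buys is that the final tree is \emph{spanning}: if a component $C'$ of $C - T$ survived, its neighbourhood in $T$ would be infinite, hence an infinite chain, yielding a ray of $T$ that cannot be finitely separated from the first dispersed block meeting $C'$ --- contradiction. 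Conversely, connectedness alone does not ``guarantee the search exhausts $V(C)$'': your own example of a ray $t_1t_2\cdots$ dominated by a single vertex shows precisely that a maximal normal tree grown greedily in a connected graph may fail to span. So the two hypotheses play exactly the opposite roles from the ones you assign them, and as written the one nontrivial step of your proposal carries no proof.
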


For distinct vertices $v,w$ of $G$ we denote by $\kappa(v,w) = \kappa_G(v,w)$ the connectivity between $v$ and $w$ in $G$, i.e.\  the largest size of a family of independent $v-w$ paths. If $v$ and $w$ are non-adjacent, this is by Menger's theorem equivalent to the minimal size of a $v-w$ separator in $G$.

\begin{fact}[Halin, {\cite[(15)]{halin1967unterteilungen}}]
\label{fact_Halin}
Let $U$ be a countable set of vertices in $G$. There is a fat $TK^{\aleph_0}$ with branch vertices $U$ if and only if $\kappa(u,v)$ is uncountable for all $u \neq v \in U$.
\end{fact}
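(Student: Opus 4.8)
The plan is to prove the forward implication in one line and to devote the real work to the converse, which I would establish by a transfinite recursion of length $\omega_1$.

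For the "only if" direction, a fat $TK^{\aleph_0}$ with branch vertices $U$ supplies, for each pair $u \neq v \in U$, the $\aleph_1$ subdivided parallel edges as a family of $\aleph_1$ internally disjoint $u$--$v$ paths. Hence $\kappa(u,v) \geq \aleph_1$, which is uncountable.

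For the converse, I would construct the required paths one at a time and then group them. First set up the bookkeeping: since the set of pairs $\{u,v\} \subset U$ is countable while $\omega_1$ has size $\aleph_1$, I can partition $\omega_1$ into countably many sets $(A_{\{u,v\}})_{\{u,v\} \subset U}$, one per pair, each of size $\aleph_1$. I then recursively choose paths $(P_\alpha)_{\alpha < \omega_1}$ as follows. At stage $\alpha$, let $\{u,v\}$ be the pair with $\alpha \in A_{\{u,v\}}$, and let $S_\alpha$ be the union of $U \setminus \{u,v\}$ together with all internal vertices of the previously chosen paths $P_\beta$ for $\beta < \alpha$. I pick $P_\alpha$ to be a $u$--$v$ path whose internal vertices avoid $S_\alpha$.

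The one genuine idea, and the point where the hypothesis is used, is that this choice is always possible. At every stage $\alpha < \omega_1$ only $|\alpha| \leq \aleph_0$ paths have been selected, and each is finite, so $S_\alpha$ is countable. Since $\kappa(u,v)$ is uncountable I may fix a family of $\aleph_1$ internally disjoint $u$--$v$ paths; because they are internally disjoint, each vertex of the countable set $S_\alpha$ lies on at most one of them, so at most countably many of these paths meet $S_\alpha$ and $\aleph_1$ of them avoid $S_\alpha$ internally. Choosing one as $P_\alpha$ completes the recursive step. (Note that $u,v \notin S_\alpha$: by construction the internal vertices of every earlier path avoid all of $U$, so $S_\alpha \cap U = U \setminus \{u,v\}$.)

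It remains to read off the fat $TK^{\aleph_0}$. For each pair $\{u,v\}$, the family $\{P_\alpha : \alpha \in A_{\{u,v\}}\}$ consists of $\aleph_1$ many $u$--$v$ paths that are pairwise internally disjoint, since each path avoids the internal vertices of all earlier ones. Across different pairs the paths are internally disjoint for the same reason, and every path has its internal vertices disjoint from $U$. Thus the union of all the $P_\alpha$ realises a subdivision of the multigraph obtained from $K^{\aleph_0}$ on $U$ by replacing each edge with $\aleph_1$ parallel edges, i.e.\ a fat $TK^{\aleph_0}$ with branch vertices $U$. The main obstacle is precisely the observation that stopping the recursion at $\omega_1$ keeps the set of used vertices countable at every stage; everything else is routine bookkeeping.
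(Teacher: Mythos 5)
Your proof is correct. Note that the paper does not actually prove this statement: it is quoted as a Fact with a citation to Halin's 1967 paper, so there is no in-paper argument to compare against; your transfinite recursion of length $\omega_1$ is essentially the standard (and Halin's original) argument. The two points that need care are both handled properly: first, since every ordinal $\alpha < \omega_1$ is countable and the chosen paths are finite, the set $S_\alpha$ of forbidden vertices stays countable at every stage; second, internal disjointness of the fixed $\aleph_1$-sized family for the current pair ensures each vertex of $S_\alpha$ ruins at most one path, so uncountably many candidates survive. One cardinal-arithmetic subtlety you implicitly use is that uncountable $\kappa(u,v)$ really yields a family of size $\aleph_1$ (rather than merely a supremum): this is automatic here, since any family witnessing a value above $\aleph_0$ must itself have size at least $\aleph_1$, as all smaller infinite cardinalities equal $\aleph_0$ --- so there is no attainment issue at this level, though it would be worth a sentence in a polished write-up.
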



\section{The first proof}


\begin{proof}[First proof of Theorem~\ref{thm_diestel}]
By induction on $|G|$. We may assume that $|G|$ is uncountable. Suppose we have a continuous increasing ordinal-indexed sequence $(G_i \colon i < \sigma)$ of induced subgraphs all of size less than $|G|$ with $G = \bigcup_{i < \sigma} G_i$ such that
\begin{enumerate}[label=$(\roman*)$]
\item \label{item_1} the end vertices of any $G_i$-path in $G$ have infinite connectivity in $G_i$, and
\item \label{item_2} the end vertices of any $G_i$-path in $G$ have uncountable connectivity in $G$. 
\end{enumerate}

Then we can construct normal spanning trees $T_i$ of $G_i$ extending each other all with the same root by (transfinite) recursion on $i$. If $\ell < \sigma$ is a limit, we may simply define $T_\ell = \bigcup_{i < \ell} T_i$. For the successor case, suppose that $T_i$ is already defined. By \ref{item_2}, the neighbourhood $N(C)$ is finite for every component $C$ of $G_{i+1} - G_i$ (otherwise we get a fat $TK^{\aleph_0}$ by Fact~\ref{fact_Halin}), and by \ref{item_1}, $N(C)$ lies on a chain of $T_i$ (as incomparable vertices in $T_i$ are separated in $G_i$ by the intersection of their finite down-closures). 
Let $t_C \in N(C)$ be maximal in the tree order of $T_i$, and let $r_C$ be a neighbour of $t_C$ in $C$. By the induction hypothesis and Fact~\ref{fact_Jung}, $C$ has a normal spanning tree $T_C$ with root $r_C$. Then $T_i$ together with all $T_C$ and edges $t_Cr_C$ is a normal spanning tree $T_{i+1}$ of $G_{i+1}$.
Once the recursion is complete, $T = \bigcup_{i < \sigma} T_i$ is the desired normal spanning tree of $G$.

It remains to construct a sequence $(G_i \colon i < \sigma)$ with \ref{item_1} and \ref{item_2}. This can be done, for example, by taking a continuous increasing chain $(M_i \colon i < \sigma)$ with $\sigma = cf(|G|)$ of ${<}|G|$-sized elementary submodels $M_i$ of 
a large enough fragment of ZFC with $G \in M_{i}$, such that $G \subset  \bigcup_{i < \sigma} M_i$, see \cite{soukup2011elementary}. Then $G_i = G \cap M_i$ is as required. 
Alternatively, 
use a countable closure argument to construct $G_i$ such that for every pair $v,w \in V(G_i)$ with $\kappa_G(v,w) \leq \aleph_0$, the graph $G_i$ contains a maximal family of independent $v-w$ paths in $G$ (this will guarantee \ref{item_2}), and for all other pairs, $G_i$ contains at least countably many independent  $v-w$ paths (this will guarantee \ref{item_1}); and note that properties \ref{item_1} and \ref{item_2} are preserved under increasing unions.
\end{proof}

\section{The second proof}
\label{sec_proof2}

Our second proof extracts the closure properties \ref{item_1} and \ref{item_2} in the previous construction, and combines them into a single algorithm constructing the normal spanning tree in $\omega$ many steps, avoiding  ordinals and transfinite constructions altogether.

\begin{proof}[Second proof of Theorem~\ref{thm_diestel}]

For every pair of distinct vertices $v$ and $w$ of $G$ with $\kappa(v,w)$ at most countable, fix a maximal collection $\mathcal{P}_{v,w} = \{P_{v,w}^1,P_{v,w}^2,\ldots\}$ of independent $v{-}w$ paths in $G$.

Construct a countable chain $T_0 \subset T_1 \subset T_2 \subset \cdots$ of rayless normal trees in $G$ with the same root $r \in V(G)$ as follows: Put $T_0 = \{r\}$, and suppose $T_n$ has already been defined. Since $T_n$ is a rayless normal tree, any component $D$ of $G - T_n$ has a finite neighbourhood $N(D)$ in $T$. For each pair $v \neq w \in N(D)$ with countable connectivity select the path $P^D_{v,w}$ with least index $\mathcal{P}_{v,w}$ intersecting $D$. By \cite[Proposition~1.5.6]{Bible}, we may extend $T_n$ finitely into every such component $D$ as to cover $P^D_{v,w} \cap D$ for all $v \neq w \in N(D)$ (or at least one abitrarily chosen vertex making the extension into $D$ is non-trivial), so that the extension $T_{n+1} \supset T_n$ is a rayless normal tree with root $r$. This completes the construction. 

The union $T = \bigcup_{n \in \N} T_n$ with root $r$ is a normal tree in $G$. We claim that $T$ is spanning unless  $G$ contains a fat $TK^{\aleph_0}$.
If $T$ is not spanning, consider a component $C$ of $G-T$. 
Then $N(C) \subseteq T$ is infinite: otherwise, $N(C) \subseteq T_n$ for some $n \in \N$ but then we would have extended $T_n$ into $C$, a contradiction. 
For every $n$, let $D_n$ be the unique component of $G - T_n$ containing $C$.

By Fact~\ref{fact_Halin}, it suffices to show that $\kappa(v,w)$ is uncountable for every $v \neq w \in N(C)$. 
 Consider a $T$-path $P$ from $v$ to $w$ with $\mathring{P} \subset C$. If $\kappa(v,w)$ was countable, then by maximality of $\mathcal{P}_{v,w}$ there is $P_{v,w}^k \in \mathcal{P}_{v,w}$ with say $P^k_{v,w} \cap \mathring{P} \ni x$. Let $m$ be minimal with $v,w \in T_{m}$. Since the $P^{D_n}_{v,w}$ are pairwise distinct, the path $P^k_{v,w}$ was selected as $P^{D_n}_{v,w}$ for some $n$ with $m  \leq n \leq m+k$. But then 
$x \in P^k_{v,w} \cap \mathring{P} \subset P^{D_n}_{v,w} \cap D_n \subset T_{n+1} \subset T$
contradicts that $P$ is a $T$-path.
\end{proof}

\section{Local versions of Diestel's criterion}

By a slight modification of this $\omega$-length algorithm, 
one readily obtains a proof of the following results, which answer \cite[Problem~3]{pitz2020d}.

\begin{theorem}
\label{cor_max2}
A set of vertices $U$ in a connected graph $G$ is contained in a normal tree of $G$ provided 
 every fat $TK^{\aleph_0}$ in $G$ can be separated from $U$ by a finite set of vertices.
\end{theorem}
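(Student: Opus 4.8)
The plan is to adapt the $\omega$-length algorithm from the second proof, modifying the extension rule so that the resulting normal tree $T$ is guaranteed to contain the prescribed set $U$, rather than being spanning. The key conceptual shift is that we no longer aim to exhaust all of $G$; instead we only need to ensure that every vertex of $U$ eventually lands in the tree, while still maintaining enough closure to force an uncountable connectivity witness (hence a fat $TK^{\aleph_0}$) whenever some vertex of $U$ is left stranded.

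First I would fix, exactly as before, a maximal collection $\mathcal{P}_{v,w}$ of independent $v{-}w$ paths for every pair $v\neq w$ of countable connectivity. I would then build the chain $T_0 \subset T_1 \subset \cdots$ of rayless normal trees with a common root $r$, but at stage $n$ I would enforce \emph{two} demands when extending into the components $D$ of $G - T_n$: the old demand covering $P^D_{v,w}\cap D$ for each pair $v\neq w \in N(D)$ of countable connectivity, and a new demand that, if $D$ meets $U$, we extend at least to absorb the $U$-vertex of least index in some fixed enumeration of $U$ among those still outside the tree and reachable in $D$. Since each extension is still finite (we absorb only finitely many paths plus one $U$-vertex per component, and $N(D)$ is finite by rayless normality), Proposition~1.5.6 of \cite{Bible} keeps every $T_n$ a rayless normal tree, and $T = \bigcup_n T_n$ is a normal tree with root $r$.

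The crux is then to show $U \subseteq T$. Suppose some $u \in U$ lies in a component $C$ of $G - T$. Running the connectivity argument verbatim from the second proof, I would show that $\kappa(v,w)$ is uncountable for every pair $v \neq w \in N(C)$: a countable-connectivity pair would have one of its finitely-indexed witnessing paths selected and absorbed at some bounded stage, contradicting that $C$ is a component of $G - T$. Since $N(C)$ is infinite (else we would have extended into $C$), Fact~\ref{fact_Halin} yields a fat $TK^{\aleph_0}$ with branch vertices in $N(C) \subseteq T$. The final point is that this fat $TK^{\aleph_0}$ cannot be separated from $U$ by any finite set: it is joined to $u \in C$ through the infinitely many chain-vertices $N(C)$, each of which connects to $C$, so no finite separator isolates it from $u \in U$, contradicting the hypothesis.

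I expect the main obstacle to be making the interaction between the two extension demands precise enough that the connectivity contradiction still goes through \emph{and} the unabsorbed $U$-vertex genuinely produces a fat $TK^{\aleph_0}$ that fails to be separated from $U$. In particular I would need to verify carefully that the infinite neighbourhood $N(C)$ together with the component $C \ni u$ furnishes, via Fact~\ref{fact_Halin} applied inside an appropriate subgraph, a fat $TK^{\aleph_0}$ whose branch set cannot be finitely severed from $u$; the cleanest route is probably to observe that each $w \in N(C)$ has a path into $C$ ending at $u$ and that any finite set misses infinitely many such internally disjoint connections. The routine bookkeeping — that each stage stays finite and that normality is preserved — carries over unchanged from the second proof.
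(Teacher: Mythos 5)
Your algorithmic modification is essentially the paper's: the paper extends only into components $D$ of $G-T_n$ meeting $U$ and covers one vertex of $U\cap D$, while you extend everywhere and add a least-index rule; these differences are immaterial (and note that for uncountable $U$ a least-index rule along a well-order gives no fairness anyway, so the argument must in any case proceed by contradiction, as both you and the paper do). The genuine gap is in the final step, exactly where you yourself flagged uncertainty. The fat $TK^{\aleph_0}$ obtained from Fact~\ref{fact_Halin} has its branch vertices in $N(C)\subseteq T$, but nothing ties it to the stranded vertex $u$: the $w$--$u$ connections through $C$ that you invoke all share the endpoint $u$, so they are never internally disjoint, and a finite set can meet all of them simultaneously. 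Concretely, if $u$ has finite degree --- say $u$ is a pendant vertex inside $C$, which the algorithm can perfectly well strand so long as $U\cap D_n$ always offers other vertices to cover --- then $u$ cannot lie on any fat $TK^{\aleph_0}$, and the finite set $N(u)$ separates the constructed fat $TK^{\aleph_0}$ from $u$. So non-separability from $U$ cannot be certified through $u$ itself, and your proposed ``cleanest route'' fails.

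The paper is aware that this step is delicate: it does not argue through $u$ but defers to \cite{pitz2020unified}, where non-separability is witnessed instead by the vertices of $U$ that the algorithm \emph{did} absorb. At each stage $n$ the extension into the component $D_n\ni u$ covers some $u_n\in U\cap D_n$; by normality this extension hangs above $t_n=\max N(D_n)$ in the tree order, and the tail of the infinite chain $N(C)$ also lies above $t_n$, since $C\subseteq D_n$ forces $N(C)\cap T_n\subseteq N(D_n)$ and all later chain vertices to be added above $t_n$. Hence, given any finite $S$, one can choose $n$ with $S\cap T\subseteq T_n$ and connect $u_n$ to a branch vertex of the fat $TK^{\aleph_0}$ by a path running entirely inside $T$ through vertices above $t_n$ --- descending from $u_n$ to the branch of $T$ containing the chain $N(C)$, then ascending along it --- which therefore avoids $S$. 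Repairing your proof means replacing the ``internally disjoint connections to $u$'' claim by this (or an equivalent) argument; as written, that step is false.
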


\begin{proof}
Let $U$ be a set of vertices such that every fat $TK^{\aleph_0}$ in $G$ can be separated from $U$ by a finite set of vertices. Use the algorithm from Section~\ref{sec_proof2}, but only extend $T_n$ into a component $D$ of $G - T_n$ with $U \cap D \neq \emptyset$. Additionally, make sure to cover at least one vertex from $U \cap D$. 

It remains to argue that $U$ is contained in $T = \bigcup T_n$. Otherwise, there is a component $C$ of $G-T$ containing a vertex from $U$. As in Section~\ref{sec_proof2}, this gives us a fat $TK^{\aleph_0}$ in $G$ which furthermore cannot be separated from $U$ by a finite set of vertices, cf.\  \cite{pitz2020unified}.
\end{proof}

\begin{theorem}
\label{cor_max1}
A connected graph has a normal spanning tree if and only if its vertex set is a countable union of sets each separated from any fat $TK^{\aleph_0}$ by a finite set of vertices.
\end{theorem}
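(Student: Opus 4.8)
The plan is to prove Theorem~\ref{cor_max1} by combining the local criterion in Theorem~\ref{cor_max2} with the basic combinatorics of unions of normal trees. The statement is an equivalence, so I would treat the two directions separately.

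For the forward direction, suppose $G$ has a normal spanning tree $T$. I would use the levels of $T$ to exhibit the required countable union. Write $V(G) = \bigcup_{n \in \N} U_n$, where $U_n$ is the set of vertices at level $n$ (i.e.\ at distance $n$ from the root $r$ in $T$). Each $U_n$ is an antichain in the tree order $\leq_T$, and the key point is that such an antichain can be separated from any connected subgraph lying ``below'' it by the finite down-closures guaranteed in Section~2: incomparable nodes $v,w$ are separated by $\lceil v \rceil \cap \lceil w \rceil$. Concretely, I expect that any fat $TK^{\aleph_0}$ in $G$ has a branch vertex on every sufficiently high level, so it can be separated from the finite-level set $U_n$ by cutting along a finite initial segment of $T$. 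I would verify that each $U_n$ is finitely separated from every fat $TK^{\aleph_0}$, completing this direction.

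For the backward direction, suppose $V(G) = \bigcup_{n \in \N} U_n$ with each $U_n$ finitely separable from every fat $TK^{\aleph_0}$. A finite union of such sets is again finitely separable from any fat $TK^{\aleph_0}$ (take the union of the finitely many separators), so the partial unions $U_0 \cup \cdots \cup U_n$ inherit the separation property. By Theorem~\ref{cor_max2}, each such finite union is contained in a normal tree of $G$. The task is then to build a single normal spanning tree from this countable exhaustion. I would run the $\omega$-length algorithm of Section~\ref{sec_proof2} once more, but at stage $n$ only require the construction to eventually cover $U_n$; since $\bigcup_n (U_0 \cup \cdots \cup U_n) = V(G)$, the resulting normal tree $T = \bigcup_n T_n$ will be spanning. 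Normality is preserved throughout by the same rayless-extension argument as before.

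The main obstacle I anticipate is the forward direction, specifically checking carefully that each level set $U_n$ really is finitely separable from every fat $TK^{\aleph_0}$, since a fat $TK^{\aleph_0}$ need not be a subtree of $T$ and its branch vertices could be scattered across many levels. The cleanest route is probably to argue by contradiction: if some fat $TK^{\aleph_0}$ could not be finitely separated from $U_n$, one could use its uncountable connectivity between branch vertices (Fact~\ref{fact_Halin}) together with the chain structure of neighbourhoods of components of $G - T$ to locate a vertex of $U_n$ with uncountable connectivity into the fat structure, contradicting the fact that $T$ is a normal \emph{spanning} tree (whose levels separate the graph with finite sets). I would make this precise and then assemble both directions.
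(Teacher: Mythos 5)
Your overall architecture mirrors the paper's proof (levels of the normal spanning tree for the forward direction, a modified run of the $\omega$-length algorithm for the backward direction), but both directions have genuine gaps. In the forward direction, your key claim --- that any fat $TK^{\aleph_0}$ has a branch vertex on every sufficiently high level --- is simply false: a fat $TK^{\aleph_0}$ is just a subgraph of $G$, unrelated to the tree $T$, and its branch vertices may all lie on a single level or be scattered with no pattern, so no argument ``cutting along a finite initial segment of $T$'' can start from it. What the paper recalls instead is the standard fact that each level $U_n$ of a normal spanning tree is \emph{dispersed}: any ray $R$ in $G$ follows a normal ray of $T$, so for a node $t$ of level $>n$ on that normal ray, the finite down-closure $\lceil t \rceil$ separates a tail of $R$ from $U_n$. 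This transfers from rays to fat $TK^{\aleph_0}$'s because removing a finite set destroys only finitely many of the uncountably many parallel paths between any two branch vertices, so the fat $TK^{\aleph_0}$ cannot be disconnected from its own rays; your alternative contradiction sketch via ``uncountable connectivity into the fat structure'' never reaches an actual contradiction and would need this dispersedness fact anyway.

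In the backward direction, applying Theorem~\ref{cor_max2} to the finite unions $U_0 \cup \cdots \cup U_n$ is correct but idle: it produces a \emph{different} normal tree for each $n$, and normal trees do not amalgamate, so this buys nothing towards a single spanning tree --- as you implicitly concede by re-running the algorithm. But there the crucial step is missing: ``at stage $n$ only require the construction to eventually cover $U_n$'' is exactly the statement that needs proof, and it is not automatic, since each $U_n$ may be uncountable while every stage extends only finitely into each component. The paper's remedy is a priority rule: when extending into a component $D$ of $G - T_n$, additionally cover one vertex of $V_{n_D}$ where $n_D$ is \emph{minimal} with $V_{n_D} \cap D \neq \emptyset$. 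If some vertex then survived in a component $C$ of $G - T$, the argument of Section~\ref{sec_proof2} yields a fat $TK^{\aleph_0}$ with branch set $N(C)$, and the minimality rule forces infinitely many covered vertices of one fixed $V_m$ inside the shrinking components $D_n \supseteq C$, so that no finite vertex set can separate $V_m$ from this fat $TK^{\aleph_0}$ --- contradicting the hypothesis. Without the priority rule and this closing contradiction, your claim that $T$ is spanning is circular.
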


\begin{proof}
For the forward implication, recall that the levels of any normal spanning tree can be separated by a finite set of vertices from any ray, and hence in particular from any fat $TK^{\aleph_0}$. Conversely, let $\{V_n \colon n \in \N\}$ be a collection of fat $TK^{\aleph_0}$-dispersed sets in $G$ with $V(G) = \bigcup_{n \in \N} V_n$. Adapt the algorithm from Section~\ref{sec_proof2}, so that when extending $T_n$ into a component $D$ of $G - T_n$, we additionally cover a vertex $v_D \in D \cap V_{n_D}$ where $n_D$ minimal such that $V_{n_D} \cap D \neq \emptyset$. The proof then proceeds as in \cite{pitz2020unified}.
\end{proof}

\bibliographystyle{plain}
\bibliography{reference}

\begin{thebibliography}{1}

\bibitem{Bible}
Reinhard Diestel.
\newblock {\em {Graph Theory}}.
\newblock Springer, 5th edition, 2015.

\bibitem{diestel2016simple}
Reinhard Diestel.
\newblock A simple existence criterion for normal spanning trees.
\newblock {\em The Electronic Journal of Combinatorics}, 2016.
\newblock P2:33.

\bibitem{halin1967unterteilungen}
Rudolf Halin.
\newblock Unterteilungen vollst{\"a}ndiger {G}raphen in {G}raphen mit
  unendlicher chromatischer {Z}ahl.
\newblock In {\em Abhandlungen aus dem Mathematischen Seminar der
  Universit{\"a}t Hamburg}, volume~31, pages 156--165. Springer, 1967.

\bibitem{halin1978simplicial}
Rudolf Halin.
\newblock Simplicial decompositions of infinite graphs.
\newblock In {\em Annals of Discrete Mathematics}, volume~3, pages 93--109.
  Elsevier, 1978.

\bibitem{jung1969wurzelbaume}
Heinz~A. Jung.
\newblock Wurzelb{\"a}ume und unendliche {W}ege in {G}raphen.
\newblock {\em Mathematische Nachrichten}, 41(1-3):1--22, 1969.

\bibitem{pitz2020d}
Max Pitz.
\newblock Proof of {H}alin's normal spanning tree conjecture.
\newblock https://arxiv.org/abs/2005.02833, 2020.

\bibitem{pitz2020unified}
Max Pitz.
\newblock A unified existence theorem for normal spanning trees.
\newblock https://arxiv.org/abs/2003.11575, 2020.

\bibitem{soukup2011elementary}
Lajos Soukup.
\newblock Elementary submodels in infinite combinatorics.
\newblock {\em Discrete mathematics}, 311(15):1585--1598, 2011.

\end{thebibliography}

\end{document}